\newtheorem{theorem}{Theorem}[section]
\newtheorem{lemma}[theorem]{Lemma}
\theoremstyle{definition}
\numberwithin{equation}{section}
\newcommand{\vp}{\varphi}
\newcommand{\clb}{\mathcal{B}}
\newcommand{\cle}{\mathcal{E}}
\newcommand{\clh}{\mathcal{H}}
\newcommand{\clk}{\mathcal{K}}
\newcommand{\clm}{\mathcal{M}}
\newcommand{\clq}{\mathcal{Q}}
\newcommand{\cls}{\mathcal{S}}
\newcommand{\z}{\bm{z}}
\newcommand{\w}{\bm{w}}
\newcommand{\K}{\bm{k}}
\newcommand{\D}{\mathbb{D}}
\newcommand{\B}{\mathbb{B}}
\newcommand{\raro}{\rightarrow}
\begin{document}

%\today

\setcounter{page}{1}

%\today

\title[Commutant lifting and interpolation]{Commutant lifting and Nevanlinna-Pick interpolation in several variables}

\author[Deepak]{Deepak K. D.}
\address{Indian Statistical Institute, Statistics and Mathematics Unit, 8th Mile, Mysore Road, Bangalore, 560059,
India}
\email{dpk.dkd@gmail.com }

\author[Pradhan]{Deepak Kumar Pradhan}
\address{Indian Statistical Institute, Statistics and Mathematics Unit, 8th Mile, Mysore Road, Bangalore, 560059,
India}
\email{deepak12pradhan@gmail.com}

\author[Sarkar]{Jaydeb Sarkar}
\address{Indian Statistical Institute, Statistics and Mathematics Unit, 8th Mile, Mysore Road, Bangalore, 560059,
India}
\email{jay@isibang.ac.in, jaydeb@gmail.com}

\author[Timotin]{Dan Timotin}
\address{Institute of Mathematics of the Romanian Academy, P.O. Box 1-764, Bucharest, 014700, Romania}
\email{Dan.Timotin@imar.ro}

\subjclass[2010]{30E05, 47A13, 47A20, 34L25, 47B32, 47B35, 32A35, 32A36, 32A38}

\keywords{Commutant lifting theorem, Nevanlinna-Pick interpolation, weighted Bergman spaces, dilations, multipliers}

\begin{abstract}
This paper concerns a commutant lifting theorem and a Nevanlinna-Pick type interpolation result in the setting of multipliers from vector-valued Drury-Arveson space to a large class of vector-valued reproducing kernel Hilbert spaces over the unit ball in $\mathbb{C}^n$. The special case of reproducing kernel Hilbert spaces includes all natural examples of Hilbert spaces like Hardy space, Bergman space and weighted Bergman spaces over the unit ball.
\end{abstract}

\maketitle

\section{Introduction}

Let $\D = \{z \in \mathbb{C} : \displaystyle |z| < 1\}$ be the open unit disc in the complex plane $\mathbb{C}$. The classical Nevanlinna–Pick interpolation theorem \cite{RN, Pick} states: Given distinct $n$ points $\{z_i\}_{i=1}^n \subseteq \D$ (initial data) and $n$ points $\{w_i\}_{i=1}^n \subseteq \D$ (target data), there exists a $\vp \in H^\infty(\D)$ such that $\|\vp\|_{\infty} \leq 1$ and such that
\[
\varphi(z_{i}) = w_{i},
\]
for all $i = 1, \ldots, n$, if and only if the \textit{Pick matrix}
\[
\begin{bmatrix}
\frac{1 - w_i \bar{w}_j}{1 - z_i \bar{z}_j}\end{bmatrix}_{i,j=1}^n,
\]
is positive semi-definite. Here we denote by $H^\infty(\D)$ the Banach algebra of all bounded analytic functions on $\D$ equipped with the norm $\|\vp\| = sup\{|\vp(z)|: z \in \D\}$, $\vp \in H^\infty(\D)$. In his seminal paper \cite{Sarason}, Sarason proved the commutant lifting theorem for compressions of the shift operator to shift co-invariant subspaces of the Hardy space which gives a simpler and elegant proof of the Nevanlinna–Pick interpolation theorem.

Sarason's approach to the commutant lifting theorem, along with its direct application to Nevanlinna–Pick interpolation theorem, is deeply connected with a number of classical problems in function theory and operator theory and have been studied extensively in the past few decades (cf. \cite{F}). There also has been a great deal of interest in analyzing the possibilities of commutant lifting theorem and interpolation (and other related problems) in the setting of general reporducing kernel Hilbert spaces over domains in $\mathbb{C}^n$, $n \geq 1$ (for instance, see \cite{BB}, \cite{EP}, \cite{AT}, \cite{BTV} and \cite{DL}).

In this paper we make a contribution to a commutant lifting theorem and a version of Nevanlinna-Pick interpolation interpolation in several variables. To be more precise, let $m \geq 1$ and let $\clh_m$ denotes the reproducing kernel Hilbert space corresponding to the kernel $k_m$ on $\mathbb{B}^n$, where
\[
k_m(\z, \w) = (1 - \sum_{i=1}^n z_i \bar{w}_i)^{-m} \quad \quad (\z, \w \in \mathbb{B}^n),
\]
and $\mathbb{B}^n = \{\z = (z_1, \ldots, z_n) \in \mathbb{C}^n: \displaystyle \sum_{i=1}^n |z_i|^2 < 1\}$. Recall that $\clh_m$ is the Drury-Arveson space (popularly denoted by $H^2_n$), the Hardy space, the Bergman space and the weighted Bergman space over $\mathbb{B}^n$ for $m = 1$, $m = n$, $m = n+1$ and $m > n+1$, respectively.

Our main results, restricted to $\clh_m$, $m > 1$, can now be formulated as follows:

\noindent \textit{Commutant lifting theorem (Theorem \ref{thm-CLT main}):} Suppose $\clq_1$ and $\clq_2$ are joint $(M_{z_1}, \ldots, M_{z_n})$ co-invariant subspaces of $H^2_n (= \clh_1)$ and $\clh_m$, respectively. Let $X \in \clb(\clq_1, \clq_2)$ and $\|X\| \leq 1$. If
\[
X (P_{\clq_1} M_{z_i}|_{\clq_1}) = (P_{\clq_2} M_{z_i}|_{\clq_2}) X,
\]
for all $i = 1, \ldots, n$, then there exists a holomorphic function $\vp : \mathbb{B}^n \raro \mathbb{C}$ such that the multiplication operator $M_{\vp} \in \clb(H^2_n, \clh_m)$, $\|M_{\vp}\| \leq 1$ (that is, $\vp$ is a \textit{contractive multiplier}), and
\[
X = P_{\clq_2} M_{\vp}|_{\clq_1}.
\]
Thus, we have the following commutative diagram:
\[
\xymatrix{
H^2_n \ar@{.>}[rr]^{\displaystyle M_{\vp}} \ar@{->}[dd]_{\displaystyle P_{\clq_1}}
&& \clh_m \ar@{->}[dd]^{\displaystyle P_{\clq_2}}    \\ \\
\clq_1 \ar@{->}[rr]_{\displaystyle X} && \clq_2
}
\]
Given a closed subspace $\cls$ of a Hilbert space $\clh$ we denote by $P_{\cls}$ the orthogonal projection of $\cls$ on $\clh$.

\noindent \textit{Nevanlinna–Pick interpolation theorem (Theorem \ref{thm-interpolation}):} Given distinct $n$ points $\{\z_i\}_{i=1}^n \subseteq \mathbb{B}^n$ and $n$ points $\{w_i\}_{i=1}^n \subseteq \D$, there exists a contractive multiplier $\vp$ such that
\[
\varphi(\z_{i}) = w_{i},
\]
for all $i = 1, \ldots, n$ if and only if the matrix
\[
\begin{bmatrix}
\frac{1}{(1 - \langle \z_i, {\z}_j \rangle)^{m}} - \frac{w_i \bar{w}_j}{1 - \langle \z_i, {\z}_j \rangle}
\end{bmatrix}_{i,j=1}^n,
\]
is positive semi-definite. Here $\langle \z, \w \rangle = \displaystyle \sum_{i=1}^n z_i \bar{w}_i$ for all $\z, \w \in \mathbb{C}^n$.

We make strong use of the commutant lifting theorem in the setting of Drury-Arveson space (see Theorem \ref{thm-BTV}) and a refined factorization result (see Theorem \ref{multiplier-factor}) concerning multipliers between Drury-Arveson space and a large class of analytic reproducing kernel Hilbert space over $\mathbb{B}^n$.

The remainder of the paper is organized as follows. Section 2 discusses some useful and known facts about reproducing kernel Hilbert spaces. Section 3 presents the commutant lifting theorem. Section 4 is devoted to factorizations of multipliers. The factorization results obtained here may be of independent interest. Section 5 provides the interpolation theorem.

\section{Preliminaries}

The Drury-Arveson space over the unit ball $\mathbb{B}^n$ in $\mathbb{C}^n$ will be denoted by $H^2_n$. Recall that $H^2_n$ is a reproducing kernel Hilbert space corresponding to the kernel function
\[
k_1(\z, \w) = (1 - \displaystyle \sum_{i=1}^n z_i \bar{w}_i)^{-1} \quad \quad (\z, \w \in \mathbb{B}^n).
\]
Let $k : \mathbb{B}^n \times \mathbb{B}^n \raro \mathbb{C}$ be a kernel such that $k$ is analytic in the first variables $\{z_1, \ldots, z_n\}$. We say that $k$ is \textit{regular} if there exists a kernel $\tilde{k} : \mathbb{B}^n \times \mathbb{B}^n \raro \mathbb{C}$, analytic in $\{z_1, \ldots, z_n\}$, such that
\[
k(\z, \w) = k_1(\z, \w) \tilde{k}(\z, \w) \quad \quad (\z, \w \in \mathbb{B}^n).
\]
If $k$ is a regular kernel, then $\clh_k$, the reproducing kernel Hilbert space corresponding to the kernel $k$, will be referred as a \textit{regular reproducing kernel Hilbert space}.

In the case of a regular reproducing kernel Hilbert space $\clh_k$, it follows \cite{KSST} that $M_{z_i}$, the multiplication operator by the coordinate function $z_i$, is bounded. Note that
\[
(M_{z_i} f)(\w) = w_i f(\w),
\]
for all $f \in \clh_k$, $\w \in \mathbb{B}^n$ and $i = 1, \ldots, n$. Moreover, it also follows that the commuting tuple $(M_{z_1}, \ldots, M_{z_n})$ on $\clh_k$ is a \textit{row contraction}, that is
\[
\sum_{i=1}^n M_{z_i} M_{z_i}^* \leq I_{\clh_k}.
\]
If $\cle$ is a Hilbert space, then we also say that $\clh_k \otimes \cle$ is a regular reproducing kernel Hilbert space. Note that the kernel function of $\clh_k \otimes \cle$ is given by
\[
\mathbb{B}^n \times \mathbb{B}^n \ni(\z, \w) \mapsto k(\z, \w) I_{\cle}.
\]
The $\cle$-valued Drury-Arveson space, denoted by $H^2_n(\cle)$, is the reproducing kernel Hilbert space corresponding to the $\clb(\cle)$-valued kernel function
\[
\B^n \times \B^n \ni (\z, \w) \mapsto k_1(\z, \w) I_{\cle}.
\]
To simplify the notation, we often identify $H^2_n(\cle)$ with $H^2_n \otimes \cle$ via the unitary map defined by $z^{\bm{k}} \eta \mapsto z^{\bm{k}} \otimes \eta$ for all $\K \in \mathbb{Z}_+^n$ and $\eta\in \cle$. This also enable us to identify $(M_{z_1}, \ldots, M_{z_n})$ on $H^2_n(\cle)$ with $(M_{z_1} \otimes I_{\cle}, \ldots, M_{z_n} \otimes I_{\cle})$ on $H^2_n \otimes \cle$.

Typical examples of regular reproducing kernel Hilbert spaces arise from weighted Bergman spaces over $\mathbb{B}^n$. More specifically, let $\lambda >1$, and let
\begin{equation}\label{eq-lambda}
k_{\lambda}(\z, \w) = (1 - \displaystyle \sum_{i=1}^n z_i \bar{w}_i)^{-\lambda} \quad \quad (\z, \w \in \mathbb{B}^n).
\end{equation}
Then $\clh_{k_{\lambda}}$ is a regular reproducing kernel Hilbert space. Note that $\clh_{k_{\lambda}}$ is the Hardy space, Bergman space and weighted Bergman space for $\lambda = n$, $n+1$ and $n+1+\alpha$ for any $\alpha > 0$, respectively.

Suppose $\clh$ and $\cle_*$ are Hilbert spaces and $(T_1, \ldots, T_n)$ is a commuting tuple of bounded linear operators on $\clh$. We say that $(T_1, \ldots, T_n)$ on $\clh$ \textit{dilates} to $(M_{z_1} \otimes I_{\cle_*}, \ldots, M_{z_n} \otimes I_{\cle_*})$ on $H^2_n \otimes \cle_*$ if there exists an isometry $\Pi : \clh \raro H^2_n \otimes \cle_*$ such that
\[
\Pi T_i^* = (M_{z_i} \otimes I_{\cle_*})^* \Pi,
\]
for all $i = 1, \ldots, n$ (cf. \cite{JS2}). We often say that $\Pi : \clh \raro H^2_n \otimes \cle_*$ is a \textit{dilation} of $(T_1, \ldots, T_n)$.

If $\clh = \clh_k$ is a regular reproducing kernel Hilbert space, then by [Theorem 6.1, \cite{KSST}], it follows that $(M_{z_1} \otimes I_{\cle}, \ldots, M_{z_n} \otimes I_{\cle})$ on $\clh_k \otimes \cle$ dilates to $(M_{z_1} \otimes I_{\cle_*}, \ldots, M_{z_n} \otimes I_{\cle_*})$ on $H^2_n \otimes \cle_*$ for some Hilbert space $\cle_*$. More specifically:

\begin{theorem}\label{thm-KSST}
Let $\cle$ be a Hilbert space. If $\clh_k$ is a regular reproducing kernel Hilbert space, then there exist a Hilbert space $\cle_*$ and an isometry
\[
\Pi_k: \clh_k \otimes \cle \raro H^2_n \otimes \cle_*,
\]
such that
\[
\Pi_k (M_{z_i} \otimes I_{\cle})^* = (M_{z_i} \otimes I_{\cle_*})^* \Pi_k,
\]
for all $i = 1, \ldots, n$.
\end{theorem}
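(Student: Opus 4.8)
The plan is to build $\Pi_k$ explicitly on the reproducing kernel functions of $\clh_k \otimes \cle$ and to let the regularity of $k$ manufacture the target space $\cle_*$. Write the kernel functions as $k(\cdot, \w) \otimes \eta$ with $\w \in \B^n$ and $\eta \in \cle$; these span a dense subspace of $\clh_k \otimes \cle$, and since $M_{z_i}$ is bounded on a regular $\clh_k$, the reproducing property gives $(M_{z_i} \otimes I_{\cle})^* (k(\cdot, \w) \otimes \eta) = \bar{w}_i\, k(\cdot, \w) \otimes \eta$. The analogous identity holds for $k_1$ in $H^2_n \otimes \cle_*$. Hence any isometry sending each joint eigenvector $k(\cdot, \w) \otimes \eta$ of $\{(M_{z_i}\otimes I_{\cle})^*\}_i$ to a joint eigenvector of $\{(M_{z_i}\otimes I_{\cle_*})^*\}_i$ with the same eigenvalue tuple $(\bar w_1, \ldots, \bar w_n)$ will automatically satisfy the desired intertwining, so the whole task reduces to producing such an isometry.

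Since $\clh_k$ is regular, $k = k_1 \tilde{k}$ with $\tilde{k}$ a kernel, hence positive definite. I would factor $\tilde{k}$ by the standard Moore--Aronszajn/Kolmogorov decomposition: there is a Hilbert space $\clf$ and a map $g : \B^n \raro \clf$ with $\tilde{k}(\z, \w) = \langle g(\w), g(\z) \rangle_{\clf}$; concretely one may take $\clf = \clh_{\tilde{k}}$ and $g(\w) = \tilde{k}(\cdot, \w)$. Setting $\cle_* = \clf \otimes \cle$ and defining $L(\w) : \cle \raro \cle_*$ by $L(\w)\eta = g(\w) \otimes \eta$, one obtains the operator factorization $L(\z)^* L(\w) = \tilde{k}(\z, \w)\, I_{\cle}$.

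Now define $\Pi_k$ on kernel functions by
\[
\Pi_k \big( k(\cdot, \w) \otimes \eta \big) = k_1(\cdot, \w) \otimes L(\w)\eta.
\]
The one substantive check is that this preserves inner products: using $k = k_1 \tilde{k}$ and $L(\z)^* L(\w) = \tilde{k}(\z, \w) I_{\cle}$,
\[
\langle k_1(\cdot, \w) \otimes L(\w)\eta,\ k_1(\cdot, \z) \otimes L(\z)\xi \rangle = k_1(\z, \w)\, \tilde{k}(\z, \w)\, \langle \eta, \xi\rangle = k(\z, \w)\, \langle \eta, \xi \rangle,
\]
which is exactly $\langle k(\cdot, \w) \otimes \eta,\ k(\cdot, \z) \otimes \xi \rangle$. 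This single identity simultaneously shows that $\Pi_k$ is well defined on the (possibly dependent) spanning family of kernel functions and that it is isometric there, so it extends to an isometry on all of $\clh_k \otimes \cle$. The relation $\Pi_k (M_{z_i}\otimes I_{\cle})^* = (M_{z_i}\otimes I_{\cle_*})^* \Pi_k$ then holds on kernel functions by the eigenvalue bookkeeping of the first paragraph, and hence everywhere by density and boundedness.

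I expect the genuine content to be front-loaded entirely into the regularity hypothesis: what matters is the factorization $k = k_1 \tilde{k}$, since this is precisely what makes the quotient $k/k_1 = \tilde{k}$ a positive definite kernel and therefore factorable as $L(\z)^* L(\w)$. Everything after that is the routine verification that a map matching eigenvectors and inner products extends to an intertwining isometry. The only point demanding mild care is justifying that $\Pi_k$ is well defined \emph{before} it is known to be bounded, which is why I would phrase the argument through the inner-product computation above rather than by prescribing $\Pi_k$ on a basis.
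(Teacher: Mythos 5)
Your proof is correct, and it is essentially the factorization argument that the paper outsources: the paper gives no in-text proof of this theorem, instead citing \cite[Theorem 6.1]{KSST} and remarking that the statement also follows from the abstract dilation theory of pure row contractions (Muller--Vasilescu \cite{MV}, Arveson \cite{WA}). Your explicit construction is exactly the content behind the KSST citation; in fact your $\Pi_k$ coincides with the adjoint of the coisometric multiplier that the paper builds later, in Section 4, where the map $(\pi F)(\z) = F(\z,\z)$ is shown to be a coisometry from $H^2_n \otimes \clh_{\tilde{k}}$ onto $\clh_k$ --- one checks $\pi^*\bigl(k(\cdot,\w)\bigr) = k_1(\cdot,\w) \otimes \tilde{k}(\cdot,\w)$, which after tensoring with $I_{\cle}$ is precisely your formula $\Pi_k\bigl(k(\cdot,\w)\otimes\eta\bigr) = k_1(\cdot,\w)\otimes L(\w)\eta$ with $\cle_* = \clh_{\tilde{k}}\otimes\cle$. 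Your handling of the two delicate points is sound: the single inner-product identity does simultaneously settle well-definedness on the spanning family and isometry (a zero combination maps to a zero-norm image), and the eigenvector matching $(M_{z_i}\otimes I)^*\bigl(k(\cdot,\w)\otimes\eta\bigr) = \bar{w}_i\, k(\cdot,\w)\otimes\eta$ plus density and boundedness of $M_{z_i}$ (which regularity guarantees, as the paper notes via \cite{KSST}) gives the intertwining everywhere. Compared with the abstract route --- verifying that $(M_{z_1}\otimes I_{\cle},\ldots,M_{z_n}\otimes I_{\cle})$ is a pure row contraction and invoking the general dilation theorem --- your argument buys self-containedness and an explicit $\cle_*$ and $\Pi_k$, at no cost in generality for this setting.
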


Since $(M_{z_1} \otimes I_{\cle}, \ldots, M_{z_n} \otimes I_{\cle})$ on $\clh_k \otimes \cle$ is a pure row contraction \cite{KSST}, the above result also directly follows from Muller-Vasilescu \cite{MV} and Arveson \cite{WA}.

In what follows, given a Hilbert space $\clh$ and a closed subspace $\clq$ of $\clh$, we will denote by $i_{\clq}$ the inclusion map
\[
i_{\clq} : \clq \hookrightarrow \clh.
\]
Note that $i_{\clq}$ is an isometry and
\[
i_{\clq} i_{\clq}^* = P_{\clq}.
\]

We now recall the commutant lifting theorem in the setting of the Drury-Arveson space (see \cite{AT} or Theorem 5.1, page 118, \cite{BTV}).
A closed subspace $\clq$ of a regular reproducing kernel Hilbert space $\clh_k \otimes \cle$ is said to be \textit{shift co-invariant} if
\[
(M_{z_i} \otimes I_{\cle})^* \clq \subseteq \clq \quad \quad (i=1, \ldots, n).
\]

\begin{theorem}\label{thm-BTV}
Let $\cle_1$ and $\cle_2$ be Hilbert spaces. Suppose $\clq_1$ and $\clq_2$ are shift co-invariant subspaces of $H^2_n(\cle_1)$ and $H^2_n(\cle_2)$, respectively, $X \in \clb(\clq_1, \clq_2)$ and let $\|X\| \leq 1$. If
\[
X (P_{\clq_1} M_{z_i}|_{\clq_1}) = (P_{\clq_2} M_{z_i}|_{\clq_2}) X,
\]
for all $i = 1, \ldots, n$, then there exists a multiplier $\Phi \in \clm(H^2_n(\cle_1), H^2_n(\cle_2))$ such that $\|M_{\Phi}\| \leq 1$ and $P_{\clq_2} M_{\Phi}|_{\clq_1} = X$.
\end{theorem}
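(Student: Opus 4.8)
The plan is to recognize the statement as an instance of commutant lifting for row contractions and to manufacture the multiplier through the positive-kernel characterization of contractive Drury-Arveson multipliers. First I would record what co-invariance buys. Writing $S_i = M_{z_i}\otimes I_{\cle_1}$ and $T_i = M_{z_i}\otimes I_{\cle_2}$, the hypothesis $S_i^*\clq_1\subseteq\clq_1$ gives $(P_{\clq_1}M_{z_i}|_{\clq_1})^* = S_i^*|_{\clq_1}$, and likewise $(P_{\clq_2}M_{z_i}|_{\clq_2})^* = T_i^*|_{\clq_2}$. Hence the inclusion $i_{\clq_1}\colon \clq_1\hookrightarrow H^2_n(\cle_1)$ satisfies $i_{\clq_1}A_i^* = S_i^* i_{\clq_1}$ with $A_i := P_{\clq_1}M_{z_i}|_{\clq_1}$, so $i_{\clq_1}$ realizes $(S_1,\dots,S_n)$ as an isometric dilation of the model row contraction $(A_1,\dots,A_n)$ in the sense used for Theorem \ref{thm-KSST}, and symmetrically for $\clq_2$ and $B_i := P_{\clq_2}M_{z_i}|_{\clq_2}$. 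Taking adjoints in the intertwining hypothesis shows $X^*$ intertwines $T_i^*|_{\clq_2}$ with $S_i^*|_{\clq_1}$. Finally, a direct computation shows the desired conclusion $P_{\clq_2}M_\Phi|_{\clq_1}=X$ is equivalent to $P_{\clq_1}M_\Phi^*|_{\clq_2}=X^*$, while ``$\Phi$ a contractive multiplier'' says precisely that $M_\Phi$ is a contractive intertwiner of the full shift tuples. Thus the theorem asks us to lift the contractive intertwiner $X$ of the compressed shifts to a contractive intertwiner of their isometric dilations that compresses back to $X$.

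To manufacture $\Phi$ I would use that a $\clb(\cle_1,\cle_2)$-valued holomorphic $\Phi$ on $\B^n$ is a contractive multiplier $H^2_n(\cle_1)\raro H^2_n(\cle_2)$ exactly when the $\clb(\cle_2)$-valued kernel $(\z,\w)\mapsto (1-\langle \z,\w\rangle)^{-1}(I_{\cle_2}-\Phi(\z)\Phi(\w)^*)$ is positive semidefinite, equivalently when $\Phi$ admits a transfer-function (colligation) realization. The strategy is a lurking-isometry / Kolmogorov-factorization argument: starting from $X^*$, the defect of $X$, and the kernel functions, I would assemble a colligation whose associated Gram kernel is positive, and read off from it a holomorphic $\Phi$ with the positive kernel above. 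The intertwining relation $X^*T_i^*|_{\clq_2}=S_i^*|_{\clq_1}X^*$ is exactly what forces the construction to be consistent across all $n$ coordinate directions at once and what yields the compression identity $P_{\clq_1}M_\Phi^*|_{\clq_2}=X^*$.

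The hard part is the positivity step: one must derive positivity of the Gram kernel from $\|X\|\le 1$ together with the intertwining relations. Here the specific form $k_1(\z,\w)=(1-\langle\z,\w\rangle)^{-1}$ is essential, since it is a complete Nevanlinna-Pick kernel, and this is exactly the feature that makes the several-variable lifting go through. I would emphasize that a naive reduction to joint eigenvectors of the compressed shifts is \emph{not} available, because a co-invariant subspace of $H^2_n(\cle)$ need not contain enough joint eigenvectors of $(A_i^*)$ to span it; the projected kernel functions $P_{\clq}(k_1(\cdot,\w)\eta)$ span $\clq$ but are generally not eigenvectors. Consequently the positivity must be extracted globally. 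Concretely I expect it to follow either from the abstract commutant lifting theorem for row contractions, applied to the dilation picture set up in the first paragraph (the lifted intertwiner automatically commutes with $M_{z_i}\otimes I$, hence is a genuine $M_\Phi$), or from a direct Schur-complement argument exploiting the complete Pick property of $k_1$. Verifying that the resulting $M_\Phi$ is contractive, is a multiplier, and compresses back to $X$ then completes the proof.
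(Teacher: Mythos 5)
The first thing to say is that the paper does not prove Theorem \ref{thm-BTV} at all: it is \emph{recalled} as a known result, with pointers to Ambrozie--Timotin \cite{AT} and to Theorem 5.1 of Ball--Trent--Vinnikov \cite{BTV}, and is then used as a black box to derive the paper's actual contributions (Theorems \ref{thm-new BTV} and \ref{thm-CLT main}). So there is no in-paper proof to compare against; your proposal must be judged as an attempt to reprove the cited result from scratch.

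On those terms, your setup is correct and matches the strategy of the cited sources: co-invariance does give $(P_{\clq_j}M_{z_i}|_{\clq_j})^* = (M_{z_i}\otimes I)^*|_{\clq_j}$, so the inclusions are dilations in the paper's sense; the positive-kernel characterization of $\clm_1(H^2_n(\cle_1), H^2_n(\cle_2))$ is the right target; and your warning that projected kernel functions $P_{\clq}(k_1(\cdot,\w)\eta)$ span $\clq$ but are \emph{not} joint eigenvectors of the compressed adjoint shifts is exactly the correct diagnosis of why Sarason's one-variable eigenvector reduction fails here. But the proposal stops precisely at the point where the theorem begins. The entire mathematical content is the step you yourself label ``the hard part'': deriving, from $\|X\|\le 1$ together with the intertwining relations, the positivity of a concrete Gram kernel, extracting a lurking isometry and colligation from its Kolmogorov factorization, and verifying that the resulting transfer function $\Phi$ is a contractive multiplier that compresses back to $X$. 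You exhibit no kernel, construct no isometry, and verify no compression identity; instead you offer two routes it ``should'' follow. The first --- invoking ``the abstract commutant lifting theorem for row contractions, applied to the dilation picture'' --- is circular: that abstract statement is Theorem \ref{thm-BTV} in dilation form (in the paper it is Theorem \ref{thm-new BTV}, which is \emph{deduced from} Theorem \ref{thm-BTV} via Lemma \ref{lemma-thm-BTV variation}), so it cannot be used to prove itself. If instead you meant Popescu's noncommutative commutant lifting on the full Fock space, as in Davidson--Le \cite{DL}, that route does exist, but it requires a genuine argument that the noncommutative lift can be arranged to descend to the symmetric Fock space $H^2_n$ --- a step you do not address. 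The second route --- ``a direct Schur-complement argument exploiting the complete Pick property'' --- is exactly the work done in \cite{AT} and \cite{BTV}, none of which is carried out here. As written, this is an accurate and well-informed roadmap to the known proof, not a proof: the decisive positivity-and-lurking-isometry construction is missing.
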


Recall also that, given regular reproducing kernel Hilbert spaces $\clh_{k_1} \otimes \cle_1$ and $\clh_{k_2} \otimes \cle_2$, a function $\Phi : \mathbb{B}^n \raro \clb(\cle_1, \cle_2)$ is called a \textit{multiplier} from $\clh_{k_1} \otimes \cle_1$ to $\clh_{k_2} \otimes \cle_2$ if
\[
\Phi (\clh_{k_1} \otimes \cle_1) \subseteq \clh_{k_2} \otimes \cle_2.
\]
The \textit{multiplier space} $\clm(\clh_{k_1} \otimes \cle_1, \clh_{k_2} \otimes \cle_2)$ is the set of all multipliers from $\clh_{k_1} \otimes \cle_1$ to $\clh_{k_2} \otimes \cle_2$. In what follows, $\clm_1(H^2_n \otimes \cle_1, \clh_k \otimes \cle_2)$ will denote the closed ball of radius one:
\[
\clm_1(H^2_n \otimes \cle_1, \clh_k \otimes \cle_2) = \{\Phi \in \clm(H^2_n \otimes \cle_1, \clh_k \otimes \cle_2): \|M_{\Phi}\| \leq 1\}.
\]
We have the following useful characterization of multipliers (cf. Proposition 4.2, \cite{JS2}): Let $\clh_k$ be a regular reproducing kernel Hilbert space, and let $X \in \clb(H^2_n \otimes \cle_1, \clh_k \otimes \cle_2)$. Then
\[
X (M_{z_i} \otimes I_{\cle_1}) = (M_{z_i} \otimes I_{\cle_2}) X,
\]
if and only if $X = M_{\Phi}$ for some $\Phi \in \clm(H^2_n \otimes \cle_1, \clh_k \otimes \cle_2)$.

\section{Commutant lifting theorem }

We begin with a general result concerning intertwiner of bounded linear operators.

\begin{lemma}\label{lemma-thm-BTV variation}
Suppose $\Pi : \clh \raro \clk$ and $\hat{\Pi} : \hat{\clh} \raro \hat{\clk}$ are isometries, $V\ \in \clb(\clk)$, $\hat{V} \in \clb(\hat{\clk})$, $T = \Pi^* V\Pi$ and $\hat{T} = \hat{\Pi}^* \hat{V} \hat{\Pi}$. Moreover, let $X \in \clb(\clh, \hat{\clh})$ satisfies
\[
XT = \hat{T} X.
\]
If we define
\[
\clq = \Pi \clh \quad \mbox{and} \quad \hat{\clq} = \hat{\Pi} \hat{\clh},
\]
and
\[
\tilde{X} = \hat{\Pi} X \Pi^*|_{\clq},
\]
then $\tilde{X} \in \clb(\clq, \hat{\clq})$ and
\[
\tilde{X} (P_{\clq} V|_{\clq}) = (P_{\hat{\clq}} \hat{V}|_{\hat{\clq}}) \tilde{X}.
\]
\end{lemma}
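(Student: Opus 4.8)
The plan is to reduce everything to the two defining identities of an isometry, namely $\Pi^*\Pi = I_{\clh}$ together with $\Pi\Pi^* = P_{\clq}$, and the analogous pair $\hat\Pi^*\hat\Pi = I_{\hat\clh}$, $\hat\Pi\hat\Pi^* = P_{\hat\clq}$; these hold precisely because $\clq = \Pi\clh$ and $\hat\clq = \hat\Pi\hat\clh$ are the ranges of the two isometries. Once these are in hand the verification is a direct computation, so I do not expect a genuine obstacle. The only thing to be careful about is the bookkeeping of the compressions $P_{\clq}V|_{\clq}$ and $P_{\hat\clq}\hat V|_{\hat\clq}$ against the restriction defining $\tilde X$, making sure each $\Pi^*\Pi$ and $\Pi\Pi^*$ is collapsed in the right place.

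First I would check that $\tilde X$ is well defined with the claimed domain and codomain. Since $\Pi^*$ maps $\clk$ into $\clh$, $X$ maps $\clh$ into $\hat\clh$, and $\hat\Pi$ maps $\hat\clh$ into $\hat\clk$ with range exactly $\hat\clq$, the composition $\hat\Pi X \Pi^*$ is a bounded operator from $\clk$ into $\hat\clq\subseteq\hat\clk$; restricting to $\clq$ gives $\tilde X\in\clb(\clq,\hat\clq)$.

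Next, since the vectors $\Pi h$ with $h\in\clh$ exhaust $\clq$, it suffices to verify the intertwining identity on such vectors. For the left-hand side I would compute
\[
\tilde X\big(P_{\clq}V|_{\clq}\big)\Pi h = \hat\Pi X \Pi^*\big(\Pi\Pi^* V \Pi h\big) = \hat\Pi X \Pi^* V \Pi h = \hat\Pi X T h,
\]
using $\Pi\Pi^* = P_{\clq}$, then $\Pi^*\Pi = I_{\clh}$, and finally the definition $T = \Pi^* V \Pi$. For the right-hand side, applying $\tilde X$ first gives $\tilde X\Pi h = \hat\Pi X\Pi^*\Pi h = \hat\Pi X h$, and then
\[
\big(P_{\hat\clq}\hat V|_{\hat\clq}\big)\tilde X \Pi h = \hat\Pi\hat\Pi^* \hat V \hat\Pi X h = \hat\Pi \hat T X h,
\]
using $\hat\Pi\hat\Pi^* = P_{\hat\clq}$ and $\hat T = \hat\Pi^*\hat V\hat\Pi$.

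Finally, the hypothesis $XT = \hat T X$ gives $\hat\Pi X T h = \hat\Pi\hat T X h$, so the two sides agree on every $\Pi h$ and hence on all of $\clq$, which completes the argument. The heart of the computation is simply that conjugating $V$ and $\hat V$ by the respective isometries turns the compressions into $T$ and $\hat T$, at which point the given intertwining relation transfers verbatim through $\hat\Pi$.
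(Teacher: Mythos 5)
Your proof is correct and follows essentially the same route as the paper's: both arguments reduce everything to the identities $\Pi^*\Pi = I_{\clh}$, $\Pi\Pi^* = P_{\clq}$ (and their hatted counterparts) and then transfer $XT = \hat{T}X$ through the isometries, the only cosmetic difference being that you verify the intertwining on the spanning vectors $\Pi h$ while the paper carries out the identical algebra at the operator level on $\clq$. No gaps.
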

\noindent \textsf{Proof.}
Notice that $P_{\clq} = \Pi \Pi^*$ and $P_{\hat{\clq}} = \hat{\Pi} \hat{\Pi}^*$. Hence
\[
\tilde{X} = (\hat{\Pi} \hat{\Pi}^*) \hat{\Pi} X \Pi^*|_{\clq} = P_{\hat{\clq}} (\hat{\Pi} X \Pi^*)|_{{\clq}},
\]
and in particular
\[
(\hat{\Pi} X \Pi^*) {\clq} \subseteq \hat{\clq},
\]
which shows that $\tilde{X} \in \clb(\clq, \hat{\clq})$. Moreover
\[
\begin{split}
\tilde{X} (P_{\clq} V|_{\clq}) & = \hat{\Pi} X \Pi^* P_{\clq} V|_{\clq}
\\
& = \hat{\Pi} X \Pi^* V|_{\clq}
\\
& = \hat{\Pi} X T \Pi^*|_{\clq}
\\
& = \hat{\Pi} \hat{T} X \Pi^*|_{\clq}
\\
& = \hat{\Pi} \hat{\Pi}^* \hat{V} \hat{\Pi} (\hat{\Pi}^* \hat{\Pi}) X \Pi^*|_{\clq}
\\
& = P_{\hat{\clq}} \hat{V}|_{\hat{\clq}} \hat{\Pi} X \Pi^*|_{\clq}
\\
& = (P_{\hat{\clq}} \hat{V}|_{\hat{\clq}})\tilde{X},
\end{split}
\]
which completes the proof.
\qed

Now we are ready to prove a variation, in terms of dilations, of Theorem \ref{thm-BTV}.

\begin{theorem}\label{thm-new BTV}
Let $\clh$ and $\hat{\clh}$ be Hilbert spaces. Suppose $T = (T_1, \ldots, T_n)$ and $\hat{T} = (\hat{T}_1, \ldots, \hat{T}_n)$ are commuting tuples on $\clh$ and $\hat{\clh}$, respectively, $X \in \clb(\clh, \hat{\clh})$, $\|X\| \leq 1$, and
\[
XT_i = \hat{T}_i X,
\]
for all $i = 1, \ldots, n$. If $\Pi : \clh \raro H^2_n \otimes \cle$ and $\hat{\Pi} : \hat{\clh} \raro H^2_n \otimes \hat{\cle}$ are dilations of $T$ and $\hat{T}$, respectively, then there exists a multiplier $\Phi \in \clm_1(H^2_n \otimes \cle, H^2_n \otimes \hat{\cle})$ such that
\[
{X} = \hat{\Pi}^* M_{\Phi}\Pi.
\]
\end{theorem}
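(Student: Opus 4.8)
The plan is to reduce the statement to the Drury--Arveson commutant lifting theorem (Theorem \ref{thm-BTV}) by transporting all the data, via the dilations $\Pi$ and $\hat{\Pi}$, onto shift co-invariant subspaces of vector-valued Drury--Arveson spaces, with Lemma \ref{lemma-thm-BTV variation} serving as the bridge.

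First I would unpack the dilation relations. Since $\Pi$ is an isometry satisfying $\Pi T_i^* = (M_{z_i} \otimes I_{\cle})^* \Pi$, multiplying on the left by $\Pi^*$ and then taking adjoints gives $T_i = \Pi^* (M_{z_i} \otimes I_{\cle}) \Pi$, and likewise $\hat{T}_i = \hat{\Pi}^* (M_{z_i} \otimes I_{\hat{\cle}}) \hat{\Pi}$. Thus, setting $V_i = M_{z_i} \otimes I_{\cle}$ and $\hat{V}_i = M_{z_i} \otimes I_{\hat{\cle}}$, the objects $(\Pi, \hat{\Pi}, V_i, \hat{V}_i, T_i, \hat{T}_i, X)$ meet the hypotheses of Lemma \ref{lemma-thm-BTV variation} for each $i = 1, \ldots, n$. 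Writing $\clq = \Pi \clh$, $\hat{\clq} = \hat{\Pi} \hat{\clh}$ and $\tilde{X} = \hat{\Pi} X \Pi^*|_{\clq}$, the lemma then yields $\tilde{X} \in \clb(\clq, \hat{\clq})$ together with the compressed intertwining relations
\[
\tilde{X} (P_{\clq} V_i|_{\clq}) = (P_{\hat{\clq}} \hat{V}_i|_{\hat{\clq}}) \tilde{X} \qquad (i = 1, \ldots, n).
\]

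Next I would verify the remaining hypotheses of Theorem \ref{thm-BTV}. The dilation relation $V_i^* \Pi = \Pi T_i^*$ gives $V_i^* \clq = V_i^* \Pi \clh = \Pi T_i^* \clh \subseteq \Pi \clh = \clq$, so $\clq$ is shift co-invariant in $H^2_n \otimes \cle$, and the identical argument makes $\hat{\clq}$ shift co-invariant in $H^2_n \otimes \hat{\cle}$. For the norm, since $\Pi^*|_{\clq}$ is isometric (being the inverse of the isometry $\Pi : \clh \raro \clq$) and $\hat{\Pi}$ is an isometry, one gets $\|\tilde{X}\| \leq \|X\| \leq 1$. Applying Theorem \ref{thm-BTV} with $\cle_1 = \cle$, $\cle_2 = \hat{\cle}$, $\clq_1 = \clq$, $\clq_2 = \hat{\clq}$ and the contractive intertwiner $\tilde{X}$ produces a multiplier $\Phi \in \clm_1(H^2_n \otimes \cle, H^2_n \otimes \hat{\cle})$ satisfying $P_{\hat{\clq}} M_{\Phi}|_{\clq} = \tilde{X}$.

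Finally I would recover $X$ from this compression identity. Using $P_{\hat{\clq}} = \hat{\Pi} \hat{\Pi}^*$ one has $\hat{\Pi}^* = \hat{\Pi}^* P_{\hat{\clq}}$, while $\Pi \clh = \clq$ with $\Pi^* \Pi = I$ and $\hat{\Pi}^* \hat{\Pi} = I$. Hence
\[
\hat{\Pi}^* M_{\Phi} \Pi = \hat{\Pi}^* P_{\hat{\clq}} M_{\Phi} \Pi = \hat{\Pi}^* \tilde{X} \Pi = \hat{\Pi}^* (\hat{\Pi} X \Pi^*) \Pi = X,
\]
which is the asserted formula. The genuinely hard analytic content, namely the existence of a contractive multiplier realizing a compressed intertwiner, is entirely absorbed into Theorem \ref{thm-BTV}; the only real work here is the bookkeeping with the dilation isometries, and the one point that needs care is the passage from the compression relation $P_{\hat{\clq}} M_{\Phi}|_{\clq} = \tilde{X}$ back to the clean identity $X = \hat{\Pi}^* M_{\Phi} \Pi$.
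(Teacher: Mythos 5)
Your proposal is correct and follows essentially the same route as the paper: transport the data via Lemma \ref{lemma-thm-BTV variation} to the compressions on $\clq = \Pi\clh$ and $\hat{\clq} = \hat{\Pi}\hat{\clh}$, invoke Theorem \ref{thm-BTV} to get $\tilde{X} = P_{\hat{\clq}} M_{\Phi}|_{\clq}$, and then unwind the isometries to recover $X = \hat{\Pi}^* M_{\Phi} \Pi$. Your explicit verifications that $\clq$ and $\hat{\clq}$ are shift co-invariant and that $\|\tilde{X}\| \leq 1$ are points the paper leaves implicit, and they are welcome but do not change the argument.
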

\begin{proof} Let
\[
\clq = \Pi \clh \quad \mbox{and} \quad \hat{\clq} = \hat{\Pi} \hat{\clh}.
\]
If
\[
\tilde{X} = \hat{\Pi} X \Pi^*|_{\clq},
\]
then by Lemma \ref{lemma-thm-BTV variation}, it follows that $\tilde{X} \in \clb(\clq, \hat{\clq})$ and
\[
\tilde{X} (P_{\clq} (M_{z_i} \otimes I_{\cle})|_{\clq}) = (P_{\hat{\clq}} (M_{z_i} \otimes I_{\hat{\cle}})|_{\hat{\clq}}) \tilde{X},
\]
for all $i = 1, \ldots, n$. It then follows from the commutant lifting theorem, Theorem \ref{thm-BTV}, that
\[
\tilde{X} = P_{\hat{\clq}} M_{\Phi}|_{\clq},
\]
for some $\Phi \in \clm(H^2_n \otimes \cle, H^2_n \otimes \hat{\cle})$ and $\|M_{\Phi}\| \leq 1$. Then
\[
\hat{\Pi} X \Pi^*|_{\clq} = P_{\hat{\clq}} M_{\Phi}|_{\clq}.
\]
It then follows from
\[
\clq = \mbox{ran~} \Pi = \mbox{ran~} \Pi \Pi^*,
\]
that
\[
(\hat{\Pi} X \Pi^*) (\Pi \Pi^*) = P_{\hat{\clq}} M_{\Phi} (\Pi \Pi^*).
\]
Thus
\[
\hat{\Pi} X = P_{\hat{\clq}} M_{\Phi} \Pi = (\hat{\Pi} \hat{\Pi}^*) M_{\Phi} \Pi,
\]
and hence ${X} = \hat{\Pi}^* M_{\Phi} \Pi$.
\end{proof}

Now let $\clq$ be a shift co-invariant subspace of $\clh_k \otimes \cle$. An isometry $\Pi : \clq \raro H^2_n \otimes \cle_*$ is said to be a \textit{dilation of $\clq$} if
\[
\Pi (P_{\clq} (M_{z_i} \otimes I_{\cle})|_{\clq})^* = (M_{z_i} \otimes I_{\cle_*})^* \Pi,
\]for all $i = 1, \ldots, n$, that is $(P_{\clq}M_{z_1}|_{\clq}, \ldots, P_{\clq}M_{z_n}|_{\clq})$ on $\clq$ dilates to $(M_{z_1} \otimes I_{\cle_*}, \ldots, M_{z_n} \otimes I_{\cle_*})$ on $H^2_n \otimes \cle_*$ via the isometry $\Pi$.

\begin{lemma}\label{lemma-PiQ}
Let $\clh_k$ be a regular reproducing kernel Hilbert space, and let $\cle$ and $\cle_*$ be a Hilbert spaces. Suppose $\clq$ is a shift co-invariant subspace of $\clh_k \otimes \cle$. If $\Pi : \clh_k \otimes \cle \raro H^2_n \otimes \cle_*$ is a dilation of $\clh_k \otimes \cle$, then $\Pi_{\clq} : \clq \raro H^2_n \otimes \cle_*$, defined by
\[
\Pi_{\clq} = \Pi \circ i_{\clq},
\]
is a dilation $\clq$.
\end{lemma}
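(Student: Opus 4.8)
The plan is to verify the two requirements in the definition of a dilation of $\clq$: that $\Pi_{\clq}$ is an isometry, and that it carries the adjoint of each compressed shift $P_{\clq}(M_{z_i} \otimes I_{\cle})|_{\clq}$ over to $(M_{z_i} \otimes I_{\cle_*})^*$. The isometry property is immediate, since $\Pi_{\clq} = \Pi \circ i_{\clq}$ is a composition of the isometry $\Pi$ (given by hypothesis) with the isometric inclusion $i_{\clq}$, and a composition of isometries is an isometry.

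For the intertwining relation I would write, as shorthand, $M_i = M_{z_i} \otimes I_{\cle}$ and $W_i = M_{z_i} \otimes I_{\cle_*}$, and let $S_i = P_{\clq} M_i|_{\clq} = i_{\clq}^* M_i i_{\clq}$ denote the compression viewed as an operator on $\clq$, so that $S_i^* = i_{\clq}^* M_i^* i_{\clq}$. The first step is to turn the co-invariance of $\clq$ into a clean operator identity. Since $\clq$ is shift co-invariant, $M_i^*$ maps $\mathrm{ran}\, i_{\clq} = \clq$ into itself, so $P_{\clq} M_i^* i_{\clq} = M_i^* i_{\clq}$; using $P_{\clq} = i_{\clq} i_{\clq}^*$ this becomes
\[
M_i^* i_{\clq} = i_{\clq} i_{\clq}^* M_i^* i_{\clq} = i_{\clq} S_i^*.
\]

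With this identity in hand, the required relation follows by a direct computation that simply feeds it into the dilation property $\Pi M_i^* = W_i^* \Pi$ of the ambient map $\Pi$:
\[
\Pi_{\clq} S_i^* = \Pi i_{\clq} S_i^* = \Pi M_i^* i_{\clq} = W_i^* \Pi i_{\clq} = W_i^* \Pi_{\clq},
\]
which is precisely the asserted relation $\Pi_{\clq} (P_{\clq} M_i|_{\clq})^* = (M_{z_i} \otimes I_{\cle_*})^* \Pi_{\clq}$ for every $i = 1, \ldots, n$.

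I do not expect a serious obstacle here, as the statement is essentially a restriction of the dilation of $\clh_k \otimes \cle$ to the co-invariant subspace $\clq$. The only point requiring care is the bookkeeping with the inclusion map, namely recognizing $S_i^* = i_{\clq}^* M_i^* i_{\clq}$ and converting co-invariance into the identity $M_i^* i_{\clq} = i_{\clq} S_i^*$. Once that identity is established, the computation reduces entirely to the dilation property of $\Pi$, and no further analysis is needed.
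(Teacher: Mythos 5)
Your proof is correct and takes essentially the same route as the paper's: both establish the isometry property by composing $\Pi$ with $i_{\clq}$, and both use co-invariance of $\clq$ to replace $(P_{\clq}(M_{z_i}\otimes I_{\cle})|_{\clq})^*$ by the restriction of $(M_{z_i}\otimes I_{\cle})^*$ before invoking the dilation relation for $\Pi$. Your operator identity $M_i^* i_{\clq} = i_{\clq} S_i^*$ is just a cleaner packaging of the computation the paper writes out with restriction notation.
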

\begin{proof} We first observe that
\[
\Pi_{\clq}^* \Pi_{\clq} = i_{\clq}^* \Pi^*  \Pi i_{\clq} = I_{\clq}.
\]
Now we compute
\[
\begin{split}
\Pi_{\clq} (P_{\clq}  (M_{z_i} \otimes I_{\cle})|_{\clq})^* & = \Pi i_{\clq} P_{\clq}  (M_{z_i} \otimes I_{\cle})^*|_{\clq}
\\
& = \Pi (M_{z_i} \otimes I_{\cle})^*|_{\clq}
\\
& =  (M_{z_i} \otimes I_{\cle_*})^* \Pi|_{\clq}
\\
& =  (M_{z_i} \otimes I_{\cle_*})^* (\Pi i_{\clq}) i_{\clq}^*|_{\clq}
\\
& =  (M_{z_i} \otimes I_{\cle_*})^* \Pi_{\clq} i_{\clq}^*|_{\clq}.
\end{split}
\]
Now
\[
i_{\clq}^*|_{\clq} = I_{\clq},
\]
and so
\[
\Pi_{\clq} (P_{\clq}  (M_{z_i} \otimes I_{\cle})|_{\clq})^* = (M_{z_i} \otimes I_{\cle_*})^* \Pi_{\clq},
\]
for all $i = 1, \ldots, n$. This completes the proof of the lemma.
\end{proof}

We are now ready to present and prove the commutant lifting theorem.

\begin{theorem}\label{thm-CLT main}
Let $\clh_k$ be a regular reproducing kernel Hilbert space, $\cle_1$ and $\cle_2$ be Hilbert spaces, and let $\clq_1$ and $\clq_2$ be shift co-invariant subspaces of $H^2_n \otimes \cle_1$ and $\clh_k \otimes \cle_2$, respectively. Let $X \in \clb(\clq_1, \clq_2)$, and suppose that $\|X\| \leq 1$ and
\[
X (P_{\clq_1} (M_{z_i} \otimes I_{\cle_1})|_{\clq_1}) = (P_{\clq_2} (M_{z_i} \otimes I_{\cle_2})|_{\clq_2}) X,
\]
for all $i = 1, \ldots, n$. Then there exists a multiplier $\Phi \in \clm_1(H^2_n \otimes \cle_1, \clh_k \otimes \cle_2)$ such that
\[
X = P_{\clq_2} M_{\Phi}|_{\clq_1}.
\]
\end{theorem}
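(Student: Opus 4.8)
The plan is to transport the whole problem into the Drury--Arveson category, where the commutant lifting theorem is already available (Theorem \ref{thm-new BTV}, which rests on Theorem \ref{thm-BTV}), and then to pull the resulting multiplier back through the dilation of $\clh_k \otimes \cle_2$ supplied by Theorem \ref{thm-KSST}. Throughout, write $T_i = P_{\clq_1}(M_{z_i} \otimes I_{\cle_1})|_{\clq_1}$ and $\hat{T}_i = P_{\clq_2}(M_{z_i} \otimes I_{\cle_2})|_{\clq_2}$, so that the hypothesis reads $X T_i = \hat{T}_i X$ and $\|X\| \leq 1$.

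First I would produce dilations of the two compression tuples into Drury--Arveson spaces. On the domain side, $H^2_n \otimes \cle_1$ is itself a Drury--Arveson space, so the identity is a dilation of it, and Lemma \ref{lemma-PiQ} then shows that the inclusion $i_{\clq_1} : \clq_1 \raro H^2_n \otimes \cle_1$ is a dilation of $(T_1, \ldots, T_n)$. On the target side, Theorem \ref{thm-KSST} yields a Hilbert space $\cle_*$ and an isometric dilation $\Pi_k : \clh_k \otimes \cle_2 \raro H^2_n \otimes \cle_*$; Lemma \ref{lemma-PiQ} then gives that $\Pi_{\clq_2} := \Pi_k \, i_{\clq_2} : \clq_2 \raro H^2_n \otimes \cle_*$ is a dilation of $(\hat{T}_1, \ldots, \hat{T}_n)$. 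With $\clh = \clq_1$, $\hat{\clh} = \clq_2$, $\Pi = i_{\clq_1}$ and $\hat{\Pi} = \Pi_{\clq_2}$, the hypotheses of Theorem \ref{thm-new BTV} hold verbatim, so there is a contractive multiplier $\Psi \in \clm_1(H^2_n \otimes \cle_1, H^2_n \otimes \cle_*)$ with
\[
X = \Pi_{\clq_2}^* M_{\Psi} \, i_{\clq_1} = i_{\clq_2}^* \Pi_k^* M_{\Psi} \, i_{\clq_1}.
\]

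The crux is the next step: showing that $\Pi_k^* M_{\Psi}$ is itself a multiplier into $\clh_k \otimes \cle_2$. The dilation identity $\Pi_k (M_{z_i} \otimes I_{\cle_2})^* = (M_{z_i} \otimes I_{\cle_*})^* \Pi_k$ is, upon taking adjoints, equivalent to $(M_{z_i} \otimes I_{\cle_2}) \Pi_k^* = \Pi_k^* (M_{z_i} \otimes I_{\cle_*})$. Combining this with the intertwining $M_{\Psi}(M_{z_i} \otimes I_{\cle_1}) = (M_{z_i} \otimes I_{\cle_*}) M_{\Psi}$ gives
\[
(\Pi_k^* M_{\Psi})(M_{z_i} \otimes I_{\cle_1}) = \Pi_k^* (M_{z_i} \otimes I_{\cle_*}) M_{\Psi} = (M_{z_i} \otimes I_{\cle_2})(\Pi_k^* M_{\Psi}),
\]
for all $i = 1, \ldots, n$. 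By the multiplier characterization (Proposition 4.2, \cite{JS2}), the operator $\Pi_k^* M_{\Psi} \in \clb(H^2_n \otimes \cle_1, \clh_k \otimes \cle_2)$ therefore equals $M_{\Phi}$ for some $\Phi \in \clm(H^2_n \otimes \cle_1, \clh_k \otimes \cle_2)$; and since $\Pi_k^*$ is a contraction, $\|M_{\Phi}\| \leq \|\Pi_k^*\|\,\|M_{\Psi}\| \leq 1$, so $\Phi \in \clm_1(H^2_n \otimes \cle_1, \clh_k \otimes \cle_2)$.

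Finally I would unwind the identity from the second step: using $i_{\clq_2}^* = P_{\clq_2}$ (viewed as the projection onto $\clq_2$) and $\Pi_k^* M_{\Psi} = M_{\Phi}$,
\[
X = i_{\clq_2}^* \Pi_k^* M_{\Psi} \, i_{\clq_1} = i_{\clq_2}^* M_{\Phi} \, i_{\clq_1} = P_{\clq_2} M_{\Phi}|_{\clq_1},
\]
which is the desired conclusion. The genuinely substantive point is the third paragraph: the transport into Drury--Arveson spaces and the invocation of the known lifting theorem are routine once the dilation machinery is in place, but the theorem's actual content is that the lift \emph{survives} the pull-back through $\Pi_k$ as a bona fide multiplier into the possibly larger space $\clh_k \otimes \cle_2$. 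This is exactly where the regularity of $k$ (guaranteeing the existence of $\Pi_k$) and the intertwining characterization of multipliers do the essential work.
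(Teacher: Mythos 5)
Your proposal is correct and is essentially the paper's own proof: the same construction of the dilation $\Pi_{\clq_2} = \Pi_k\, i_{\clq_2}$ via Theorem \ref{thm-KSST} and Lemma \ref{lemma-PiQ}, the same application of Theorem \ref{thm-new BTV} to obtain $X = \Pi_{\clq_2}^* M_{\Psi}\, i_{\clq_1}$, and the same intertwining argument (taking the adjoint of the dilation identity) to show that $\Pi_k^* M_{\Psi}$ is a contractive multiplier $M_{\Phi}$ into $\clh_k \otimes \cle_2$. The only differences are cosmetic: the paper names your $\Psi$ as $\Phi_1$, and it asserts directly that $i_{\clq_1}$ is a dilation of $\clq_1$ where you route this observation through Lemma \ref{lemma-PiQ} with the identity map.
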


\begin{proof}
Observe that the inclusion map $i_{\clq_1} : \clq_1 \hookrightarrow H^2_n \otimes \cle_1$ is a dilation of $\clq_1$. Let $\Pi_k : \clh_k \otimes \cle_2 \raro H^2_n \otimes \hat{\cle}$ be a dilation of $\clh_k$ (see Theorem \ref{thm-KSST}), that is, $\Pi_k$ is an isometry and
\begin{equation}\label{eq:added1}
\Pi_k (M_{z_i} \otimes I_{\cle_2})^* = (M_{z_i} \otimes I_{\hat{\cle}})^* \Pi_k,
\end{equation}
for all $i = 1, \ldots, n$ and some Hilbert space $\hat{\cle}$. Set
\[
\Pi_{\clq_2} = \Pi_k i_{\clq_2}.
\]
By Lemma \ref{lemma-PiQ}, it follows that $\Pi_{\clq_2} : \clq_2 \raro H^2_n \otimes \hat{\cle}$ is a dilation of $\clq_2$. Then Theorem \ref{thm-new BTV} yields
\[
X = \Pi_{\clq_2}^* M_{\Phi_1} i_{\clq_1},
\]
for some multiplier $\Phi_1 \in \clm(H^2_n \otimes \cle_1, H^2_n \otimes \hat{\cle})$. Hence
\[
X = i_{\clq_2}^* (\Pi_k^* M_{\Phi_1}) i_{\clq_1}.
\]
Since
\[
M_{\Phi_1} (M_{z_i} \otimes I_{\cle_1})=(M_{z_i} \otimes I_{\hat{\cle}})M_{\Phi_1},
\]
we have, using also the adjoint of~\eqref{eq:added1},
\[
\Pi_k^* M_{\Phi_1} (M_{z_i} \otimes I_{\cle_1})
= \Pi_k^*  (M_{z_i} \otimes I_{\hat{\cle}})M_{\Phi_1}
= (M_{z_i} \otimes I_{\cle_2}) \Pi_k^* M_{\Phi_1},
\]
for all $i = 1, \ldots, n$, that is, $\Pi_k^* M_{\Phi_1} : H^2_n \otimes \cle_1 \raro \clh_k \otimes \cle_2$ intertwines the shifts. Consequently
\[
\Pi_k^* M_{\Phi_1} = M_{\Phi},
\]
for some multiplier $\Phi \in \clm(H^2_n \otimes \cle_1, \clh_k \otimes \cle_2)$. Hence
\[
X = i_{\clq_2}^* M_{\Phi} i_{\clq_1},
\]
and thus
\[
i_{\clq_2} X  = P_{\clq_2} M_{\Phi} i_{\clq_1}.
\]
Hence, we have
\[
X = P_{\clq_2} M_{\Phi}|_{\clq_1}.
\]
Finally
\[
\|M_{\Phi}\| \leq \|M_{\Phi_1}\| \leq 1,
\]
completes the proof of the theorem.
\end{proof}

A simpler way of presenting the above theorem, from Hilbert module point of view, is to say that the following diagram commutes:

\[
\xymatrix{ & &  & H^2_n \otimes \hat{\mathcal  E}  \\
& H^2_n \otimes \mathcal E_1 \ar[d]_{P_{\mathcal Q_1}} \ar@{.>}[r]_{M_{\Phi}} \ar@/^/@{.>}[urr]^{M_{\Phi_1}} & \mathcal H_k\otimes \mathcal E_2 \ar[d]_{P_{\mathcal Q_2}} \ar[ur]^{\Pi_k} & \\
& \mathcal Q_1 \ar[r]_{X}  & \mathcal Q_2  \ar@/_/[uur]_{\Pi_{\mathcal Q_2}} }
\]

\section{Factorizations}

Let $k$ be a regular reproducing kernel on $\mathbb{B}^n$. Then there exists a positive definite kernel $\tilde{k} : \mathbb{B}^n \times \mathbb{B}^n \raro \mathbb{C}$ such that
\[
k(\z, \w) = k_1(\z, \w) \tilde{k}(\z, \w)  \quad \quad (\z, \w \in \mathbb{B}^n).
\]
Let $\clh_{\tilde{k}}$ be the reproducing kernel Hilbert space corresponding to the kernel $\tilde{k}$. Suppose $\w \in \mathbb{B}^n$ and $ev(\w) : \clh_{\tilde{k}} \raro \mathbb{C}$ is the evaluation map, that is
\[
ev(\w) (f) = f(\w) \quad \quad (f \in \clh_{\tilde{k}}).
\]
Then
\[
\tilde{k} (\z, \w) = ev(\z)  ev(\w)^* \quad \quad (\z, \w \in \mathbb{B}^n),
\]
and so
\begin{equation}\label{eq:product formula}
k(\z,\w) =  k_1(\z, \w) \Big(ev(\z)  ev(\w)^*\Big) \quad \quad (\z,\w \in \mathbb B^n).
\end{equation}
From  Corollary 4.2 in~\cite{KSST} it follows that the map
\[
(\pi F)(\z):=F(\z, \z)  \quad \quad (\z \in \mathbb{B}^n),
\]
defines a coisometry from $H^2_n  \otimes\clh_{\tilde k}=\clh_{k_1}\otimes\clh_{\tilde k}$ to $\clh_{k}=\clh_{k_1\tilde k}$. If we view $H^2_n  \otimes\clh_{\tilde k}$ as a reproducing kernel Hilbert space of functions with values in $\clh_{\tilde k}$, then the map $ \pi $ is actually the multiplier $ M_{ev} $; indeed, if we compute the action on reproducing kernels, we have
\[
M_{ev}(f\otimes g)(\w)= f(\w)\otimes ev(\w)(g)=f(\w)\otimes g(\w)=
\pi(f\otimes g)(\w).
\]

This formula may be extended by tensorizing with $ I_\cle $, where $ \cle  $ is a Hilbert space. If we define $ \Psi_k:\clh_{\tilde{k}} \otimes \cle\to\cle $ by $ \Psi_k:=ev\otimes I_\cle $, then $ \Psi_k $ is obviously also a coisometric multiplier.
Taking into account~\eqref{eq:product formula}, we obtain the following theorem (see also \cite[Theorem 4.1]{CH} and \cite[Theorem 6.2]{KSST}):

\begin{theorem}\label{kernel-factor}
	Let $k : \mathbb{B}^n \times \mathbb{B}^n \raro \mathbb{C}$ be a regular kernel, and let
	\[
	k(\z, \w) = k_1(\z, \w) \tilde{k}(\z, \w) \quad \quad (z, \w \in \mathbb{B}^n),
	\]
	for some kernel $\tilde{k}$ on $\mathbb{B}^n$. Suppose $\clh_{\tilde{k}}$ is the reproducing kernel Hilbert space corresponding to the kernel $\tilde{k}$. If $\cle$ is a Hilbert space, then there exists a co-isometric multiplier $\Psi_k \in \clm(H^2_n \otimes (\clh_{\tilde{k}} \otimes \cle), \clh_k \otimes \cle)$ such that
	\[
	k(\z,\w) I_{\mathcal E} = \frac{ \Psi_k(\z)  \Psi_k(\w)^*}{1-\langle \z, \w \rangle} \quad \quad (\z,\w \in \mathbb B^n).
	\]
\end{theorem}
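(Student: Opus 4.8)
The plan is to use for $\Psi_k$ precisely the symbol $ev \otimes I_\cle$ constructed in the discussion preceding the statement, and then to check separately the two assertions contained in the theorem: that this $\Psi_k$ is a coisometric multiplier of the stated type, and that it satisfies the displayed factorization. No new object needs to be built, so the argument is organized purely as a verification.

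First I would record that $\Psi_k$ is a coisometric multiplier in $\clm(H^2_n \otimes (\clh_{\tilde{k}} \otimes \cle), \clh_k \otimes \cle)$. For the scalar case $\cle = \mathbb{C}$ this is exactly the input imported from Corollary 4.2 of \cite{KSST}, reinterpreted through the identification of the diagonal-restriction map $(\pi F)(\z) = F(\z,\z)$ with the multiplier $M_{ev}$: that corollary says $\pi$ is a coisometry from $H^2_n \otimes \clh_{\tilde{k}}$ onto $\clh_k$. Tensoring the entire picture with $I_\cle$ then yields that $\Psi_k = ev \otimes I_\cle$ is a coisometric multiplier from $H^2_n \otimes (\clh_{\tilde{k}} \otimes \cle)$ onto $\clh_k \otimes \cle$.

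Next I would verify the factorization formula fiberwise. For fixed $\z \in \B^n$ the symbol acts as $\Psi_k(\z) = ev(\z) \otimes I_\cle$, so its adjoint is $\Psi_k(\w)^* = ev(\w)^* \otimes I_\cle$, and therefore
\[
\Psi_k(\z) \Psi_k(\w)^* = \big(ev(\z) ev(\w)^*\big) \otimes I_\cle = \tilde{k}(\z,\w) I_\cle,
\]
the last step being the reproducing-kernel identity $\tilde{k}(\z,\w) = ev(\z) ev(\w)^*$ noted earlier. Dividing by $1 - \langle \z,\w \rangle$ and invoking $k_1(\z,\w) = (1 - \langle \z,\w \rangle)^{-1}$ together with the product formula \eqref{eq:product formula} gives
\[
\frac{\Psi_k(\z) \Psi_k(\w)^*}{1 - \langle \z,\w \rangle} = k_1(\z,\w) \tilde{k}(\z,\w) I_\cle = k(\z,\w) I_\cle,
\]
as required.

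There is no genuine obstacle here beyond bookkeeping: the single substantive ingredient, that restriction to the diagonal is a coisometric multiplier, is supplied by \cite{KSST}, and the rest is the algebra of adjoints and elementary tensors. The only point that needs a little attention is keeping straight the two coefficient spaces of the operator-valued symbol — the source $\clh_{\tilde{k}} \otimes \cle$ and the target $\cle$ — so that $\Psi_k(\w)^*$ is formed in the correct space; once this is fixed, the collapse of the scalar factor $ev(\z) ev(\w)^*$ to $\tilde{k}(\z,\w)$ and the appearance of $I_\cle$ are immediate.
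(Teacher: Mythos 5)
Your proposal is correct and follows essentially the same route as the paper: you take $\Psi_k = ev \otimes I_{\mathcal{E}}$, invoke Corollary 4.2 of \cite{KSST} (via the identification of the diagonal-restriction coisometry $\pi$ with the multiplier $M_{ev}$) for the coisometric multiplier property, and obtain the kernel identity from $\Psi_k(\z)\Psi_k(\w)^* = \tilde{k}(\z,\w)I_{\mathcal{E}}$ together with the product formula~\eqref{eq:product formula}. The paper's argument is exactly this verification, carried out in the discussion preceding the theorem.
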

%{
%\color{red}
%There is something I do not get here. If we apply the theorem to $ \cle=\mathbb C $, we obtain
%\[
%k(\z,\w)=\Psi_k(\z)  \Psi_k(\w)^* k_1(\z, \w).
%\]

%So
%\begin{equation}\label{eq:added2}
%\tilde{k}(\z, \w)= \Psi_k(\z)  \Psi_k(\w)^* .
%\end{equation}
%This last relation  does not seem to have anything with the Drury--Arveson space---so it follows that for any kernel $ \tilde{k} $ there exists a co-isometric multiplier $\Psi_k \in \clm(H^2_n \otimes \clh_{\tilde{k}} , \clh_{\tilde kk_1} )$ such that~\eqref{eq:added2} is valid.

%Is this correct? If it is, I feel again that there should be a connection with Section 4 from~\cite{KSST}}

In particular, it is instructive to consider the familiar case: weighted Bergman spaces over $\mathbb{B}^n$. Let $m >1$ be an integer and let
\[
k_m(\z, \w) = (1 - \displaystyle \sum_{i=1}^n z_i \bar{w}_i)^{-m} \quad \quad (\z, \w \in \mathbb{B}^n).
\]
Then
\[
\tilde{k}_m (\z, \w) = k_{m-1}(\z, \w),
\]
and hence $\Psi_{k_m}(\w)^* : \cle \raro \clh_{k_{m-1}} \otimes \cle$ is given by
\[
\Psi_{k_m}(\w)^* \eta = k_{m-1}(\cdot, \w) \otimes \eta,
\]
for all $\z, \w \in \mathbb{B}^n$ and $\eta \in \cle$. Note also that
\[
\langle \Psi_{k_m}(\w) (f \otimes \eta), \zeta \rangle = f(\w) \langle \eta, \zeta \rangle,
\]
for all $f \in \clh_{k_{m-1}}$, $\eta, \zeta \in \cle$ and $\w \in \mathbb{B}^n$.

\noindent For this particular case, the representation of $\Psi_{k_m}$ has been computed explicitly in Section 4 of \cite{BDS} and \cite{BB}.

Now suppose $\cle_1$ and $\cle_2$ are Hilbert spaces, and $k$ is a regular kernel on $\mathbb{B}^n$. Let $\Theta : \mathbb{B}^n \raro \clb(\cle_1, \cle_2)$ be an analytic function. From~\cite[Theorem 6.28]{PR} it follows that $\Theta \in \clm_1(H^2_n \otimes \cle_1, \clh_k \otimes \cle_2)$ if and only if
\[
%\sum_{1 \leq i, j \leq m} \langle (I - M_{\Theta} %M^*_{\Theta}) k(\w_i, \w_j) \eta_j, \eta_i \rangle \geq 0,
k(\z,\w)-k_1(\z,\w)\Theta(\z)\Theta(\w)^*
\]
is a positive definite kernel.
By virtue of Theorem \ref{kernel-factor}, this is equivalent to positive definiteness of the kernel
\[
(\z, \w) \mapsto k_1(\z, \w) (\Psi_k(\z) \Psi_k(\w)^* - \Theta(\z)\Theta(\w)^*).
\]
We may then apply~\cite[Theorem 8.57($(i)\Rightarrow(ii)$)]{AMC} to obtain the following theorem.

\begin{theorem}\label{multiplier-factor}
	Let $\cle_1$ and $\cle_2$ be Hilbert spaces, and let $\Theta : \mathbb{B}^n \raro \clb(\cle_1, \cle_2)$ be an analytic function. In the setting of Theorem \ref{kernel-factor}, the following conditions are equivalent:
	
	(i) $\Theta \in \clm_1(H^2_n \otimes \cle_1, \mathcal H_k \otimes \cle_2)$,

	(ii) there exists $\tilde{\Theta } \in \clm_1(H^2_n \otimes \cle_1, H^2_n \otimes (\clh_{\tilde{k}} \otimes \cle_2))$ such that
	\[
	M_{\Theta}  = M_{\Psi_k} M_{\tilde{\Theta}}.
	\]
\end{theorem}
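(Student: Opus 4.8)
The plan is to collapse condition (i) into a single positive-definiteness statement governed by the Drury-Arveson kernel $k_1$, and then to extract the factorization from a Leech-type theorem that is available precisely because $k_1$ is a complete Nevanlinna--Pick kernel. The converse $(ii)\Rightarrow(i)$ will be routine, so the substance lies entirely in $(i)\Rightarrow(ii)$.

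First I would invoke the standard kernel characterization of contractive multipliers \cite[Theorem 6.28]{PR}: since $\Theta$ is analytic, $\Theta \in \clm_1(H^2_n \otimes \cle_1, \clh_k \otimes \cle_2)$ holds if and only if the $\clb(\cle_2)$-valued kernel
\[
(\z,\w) \mapsto k(\z,\w) I_{\cle_2} - k_1(\z,\w)\,\Theta(\z)\Theta(\w)^*
\]
is positive definite; this is nothing but the statement that the kernel associated with $I - M_{\Theta} M_{\Theta}^*$, evaluated on reproducing kernels, is positive. Next I would substitute the factorization of $k$ furnished by Theorem \ref{kernel-factor}, namely $k(\z,\w) I_{\cle_2} = k_1(\z,\w)\,\Psi_k(\z)\Psi_k(\w)^*$, which rewrites the kernel above as
\[
(\z,\w) \mapsto k_1(\z,\w)\big(\Psi_k(\z)\Psi_k(\w)^* - \Theta(\z)\Theta(\w)^*\big).
\]
Thus (i) is equivalent to positive definiteness of this last kernel measured against $k_1$.

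At this point the key structural input enters: $k_1$ is a complete Nevanlinna--Pick kernel, so the operator-valued factorization theorem \cite[Theorem 8.57]{AMC} applies with $\Psi_k$ and $\Theta$ as the two analytic symbols. Positive definiteness of $k_1(\z,\w)(\Psi_k(\z)\Psi_k(\w)^* - \Theta(\z)\Theta(\w)^*)$ then yields a contractive multiplier $\tilde{\Theta} \in \clm_1(H^2_n\otimes\cle_1, H^2_n\otimes(\clh_{\tilde{k}}\otimes\cle_2))$ with $\Theta(\z) = \Psi_k(\z)\tilde{\Theta}(\z)$ for every $\z \in \mathbb{B}^n$, i.e. $M_{\Theta} = M_{\Psi_k} M_{\tilde{\Theta}}$ at the level of multiplication operators, establishing $(i)\Rightarrow(ii)$. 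For the converse I would argue directly: $M_{\Psi_k}$ is coisometric by Theorem \ref{kernel-factor} and $M_{\tilde{\Theta}}$ is contractive, so $M_{\Theta} = M_{\Psi_k} M_{\tilde{\Theta}}$ is contractive; moreover, being a composition of two shift-intertwiners it intertwines the coordinate shifts on $H^2_n \otimes \cle_1$ and $\clh_k \otimes \cle_2$, hence by the multiplier characterization recalled in Section 2 it is a genuine multiplier, so $\Theta \in \clm_1$.

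I anticipate that the main difficulty is bookkeeping rather than conceptual depth. One must match the hypotheses of the complete-Pick factorization theorem exactly: that $\Psi_k$ and $\Theta$ share the common target $\cle_2$, so that $\Psi_k(\z)\Psi_k(\w)^*$ and $\Theta(\z)\Theta(\w)^*$ are comparable $\clb(\cle_2)$-valued kernels, and that the factor produced is a multiplier on $H^2_n$ taking values in $\clb(\cle_1,\clh_{\tilde{k}}\otimes\cle_2)$ with the correct coefficient spaces. The only essential ingredient is the complete Nevanlinna--Pick property of $k_1$; granted that, the factorization is exactly the content of \cite[Theorem 8.57]{AMC}, and the remainder is the kernel algebra displayed above.
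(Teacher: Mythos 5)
Your proposal matches the paper's proof essentially step for step: the kernel characterization of contractive multipliers from \cite[Theorem 6.28]{PR}, the substitution $k(\z,\w) I_{\cle_2} = k_1(\z,\w)\,\Psi_k(\z)\Psi_k(\w)^*$ supplied by Theorem \ref{kernel-factor}, and the complete Nevanlinna--Pick factorization \cite[Theorem 8.57]{AMC} applied to the kernel $k_1(\z,\w)\big(\Psi_k(\z)\Psi_k(\w)^* - \Theta(\z)\Theta(\w)^*\big)$. Your direct operator argument for $(ii)\Rightarrow(i)$ (coisometry times contraction, plus the shift-intertwining characterization of multipliers) is just a harmless variant of reading the same chain of kernel equivalences backwards, so the two proofs coincide in substance.
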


More specifically, the multiplier $\Psi_k$ makes the following diagram commutative:

\setlength{\unitlength}{3mm}
\begin{center}
	\begin{picture}(40,16)(0,0)
	\put(12.7,3){$H^2_n \otimes \cle_1$}\put(19,1.6){$M_{\Theta}$}
	\put(22.9,3){$\clh_k \otimes \cle_2$} \put(22, 10){$H^2_n \otimes (\clh_{\tilde{k}} \otimes \cle_2)$} \put(24,9.2){ \vector(0,-1){5}} \put(15.8,
	4.3){\vector(1,1){5.5}} \put(18.4,
	3.4){\vector(1,0){4}}\put(15.8,6.8){$M_{\tilde{\Theta}}$}\put(25.3,6.5){$M_{\Psi_k}$}
	\end{picture}
\end{center}

One should compare Theorems \ref{kernel-factor} and \ref{multiplier-factor} with Lemma 4.1 and Theorem 4.2 in \cite{BDS} and Theorem 2.1 in \cite{BB}.

\section{Nevanlinna-Pick interpolation}

We now turn to the interpolation problem. Let $\cle_1$ and $\cle_2$ be Hilbert spaces. We denote by $\clb_1(\cle_1, \cle_2)$ the open unit ball of $\clb(\cle_1, \cle_2)$, that is
\[
\clb_1(\cle_1, \cle_2) = \{ A \in \clb(\cle_1, \cle_2): \|A\| < 1\}.
\]
We aim to solve the following version of Pick-type interpolation problem: Suppose $\{\z_i\}_{i=1}^m \subseteq \mathbb{B}^n$, $\{W_i\}_{i=1}^m \subseteq  \mathcal B_1(\mathcal E_1, \mathcal E_2)$ and $m \geq 1$. Find necessary and sufficient conditions (on $\{\z_i\}_{i=1}^m$ and $\{W_i\}_{i=1}^m$) for the existence of a multiplier $\Phi \in \clm_1 (H^2_n \otimes \cle_1, \clh_k \otimes \cle_2)$ such that
\begin{equation}\label{interpolation-condition}
\Phi(\z_{i}) = W_{i},
\end{equation}
for all $i = 1, \ldots, m$.

Given such data $\{\z_i\}_{i=1}^m \subseteq \mathbb{B}^n$, $\{W_i\}_{i=1}^m \subseteq  \mathcal B_1(\mathcal E_1, \mathcal E_2)$, set
\[
\clq_1 = \{\sum_{i=1}^m k_1(\cdot, \z_i) \zeta_i : \zeta_i \in \cle_1\},
\]
and
\[
\clq_2 = \{\sum_{i=1}^m k(\cdot, \z_i) \eta_i : \eta_i \in \cle_2\}.
\]
Obviously $\clq_1$ and $\clq_2$ are shift co-invariant subspaces of $H^2_n \otimes \cle_1$ and $\clh_k \otimes \cle_2$, respectively. Define $X: \clq_2 \rightarrow \clq_1$ by
\[
X k(\cdot, \z_i) \eta = k_{1}(\cdot, \z_i) (W_{i}^* \eta),
\]
for all $i = 1, \ldots, m$ and $\eta \in \cle_2$. Then
\[
X (M_{z_i} \otimes I_{\cle_2})^*|_{\clq_2} = (M_{z_i} \otimes I_{\cle_1})^*|_{\clq_1} X,
\]
for all $i = 1, \ldots, n$. Then, by Theorem \ref{thm-CLT main}, $X$ is a contraction if and only if there exists $\Phi\in  \clm_1( H^2_n \otimes \cle_1, \clh_k \otimes \cle_2)$ such that
\[
P_{\clq_2}M_{\Phi}|_{\clq_1} = X^*.
\]
In particular
\[
\begin{split}
k_{1} (\cdot,\z_i) (W_{i}^* \eta) & = X (k(\cdot, \z_i) \eta)
\\
& = M^*_{\Phi} (k(\cdot,\z_i) \eta)
\\
& = k_{1}(\cdot, \z_i) (\Phi(\z_i)^* \eta),
\end{split}
\]
for all $\eta \in \cle_2$ and $i = 1, \ldots, m$, and so $\Phi$ satisfies \eqref{interpolation-condition}. Conversely, if $\Phi$ satisfies \eqref{interpolation-condition}, then it is easy to see that $X$ defines a contractions from $\clq_2$ to $\clq_1$.

\noindent Now $X$ is a contraction if and only if
\[
\begin{split}
0 & \leq \langle (I - X^* X)\sum_{i=1}^m k(\cdot,\z_i) \eta_i, \sum_{i=1}^m k(\cdot,\z_i) \eta_i \rangle
\\
& \Rightarrow \sum_{1 \leq i, j \leq m}\langle k(\z_i,\z_j) \eta_j, \eta_i \rangle - \sum_{1 \leq i, j \leq m} \langle W_{i} k_1(\z_i,\z_{j}) W^*_{j} \eta_j, \eta_i \rangle \geq 0
\\
& \Rightarrow \sum_{1 \leq i, j \leq m} \langle \Big(k(\z_i, \z_j) I_{\cle_2} - \frac{W_i W^*_j}{1-\langle \z_{i}, \z_{j} \rangle}\Big)  \eta_j, \eta_i \rangle \geq 0,
\end{split}
\]
for all $\eta_1, \ldots, \eta_m \in \cle_2$, where the last equality follows from Theorem \ref{kernel-factor}.

\noindent On the other hand, Theorem \ref{multiplier-factor} says that $\Phi \in \clm_1(H^2_n \otimes \cle_1, \clh_k \otimes \cle_2)$ if and only if there exists $\tilde{\Phi} \in \clm_1(H^2_n \otimes \cle_1, H^2_n \otimes (\clh_{\tilde{k}} \otimes \cle_2))$ such that
\[
\Phi(\z) = \Psi_k(\z) \tilde{\Phi}(\z),
\]
for all $\z \in \mathbb{B}^n$. Summarizing, we have established the following interpolation theorem:

\begin{theorem}\label{thm-interpolation}
Let $\cle_1$ and $\cle_2$ be Hilbert spaces, $k$ be a regular kernel on $\mathbb{B}^n$, and let
\[
k(\z, \w) = k_1(\z, \w) \tilde{k}(\z, \w) \quad \quad (\z, \w \in \mathbb{B}^n),
\]
for some kernel $\tilde{k}$ on $\mathbb{B}^n$. Suppose $\{\z_i\}_{i=1}^m \subseteq \mathbb{B}^n$ and $\{W_i\}_{i=1}^m \subseteq  \mathcal B_1(\mathcal E_1, \mathcal E_2)$. Then the following conditions are equivalent:

(i) There exists a multiplier $\Phi \in \clm_1 (H^2_n \otimes \cle_1, \clh_k \otimes \cle_2)$ such that $\Phi(\z_{i}) = W_{i}$ for all $i = 1, \ldots, m$.

(ii) $\displaystyle \sum_{1 \leq i, j \leq m} \langle \Big(k(\z_i, \z_j) I_{\cle_2} - \frac{W_i W^*_j}{1-\langle \z_{i}, \z_{j} \rangle}\Big)  \eta_j, \eta_i \rangle$ for all $\eta_1, \ldots, \eta_m \in \cle_2$.

(iii) There exists a multiplier $\tilde{\Phi} \in \clm_1(H^2_n \otimes \cle_1, H^2_n \otimes (\clh_{\tilde{k}} \otimes \cle_2))$ such that
\[
\Psi_k(\z_i) \tilde{\Phi}(\z_i) = W_i \quad \quad (i=1, \ldots, n).
\]
\end{theorem}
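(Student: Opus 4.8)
The plan is to reduce the interpolation problem to the commutant lifting theorem of Section 3 by packaging the data $\{\z_i\}, \{W_i\}$ into a single operator between the natural co-invariant subspaces, and then to read off both the Pick-matrix condition and the factorized condition from the contractivity of that operator. First I would introduce the subspaces $\clq_1 = \{\sum_{i=1}^m k_1(\cdot, \z_i)\zeta_i : \zeta_i \in \cle_1\}$ and $\clq_2 = \{\sum_{i=1}^m k(\cdot, \z_i)\eta_i : \eta_i \in \cle_2\}$, which are shift co-invariant because each $(M_{z_j} \otimes I)^*$ sends a kernel function $k(\cdot, \z_i)\eta$ to a scalar multiple of itself (the eigenvector property of reproducing kernels). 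I would then define $X : \clq_2 \raro \clq_1$ on the spanning kernels by $X(k(\cdot,\z_i)\eta) = k_1(\cdot, \z_i)(W_i^* \eta)$ and verify the intertwining identity $X (M_{z_i}\otimes I_{\cle_2})^*|_{\clq_2} = (M_{z_i}\otimes I_{\cle_1})^*|_{\clq_1} X$ directly from that eigenvector relation.

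Taking adjoints places $X^*$ into exactly the hypothesis of Theorem \ref{thm-CLT main}, so that $X$ is a contraction if and only if there is $\Phi \in \clm_1(H^2_n \otimes \cle_1, \clh_k \otimes \cle_2)$ with $P_{\clq_2} M_{\Phi}|_{\clq_1} = X^*$. Evaluating $M_{\Phi}^*$ on $k(\cdot, \z_i)\eta$ and using that $M_{\Phi}^*(k(\cdot,\z_i)\eta) = k_1(\cdot,\z_i)(\Phi(\z_i)^*\eta)$ then forces $\Phi(\z_i) = W_i$, establishing (i) $\Leftrightarrow$ ``$X$ is a contraction''; the converse half is the routine check that an interpolant restricts to a contractive $X$. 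Next, to obtain (ii), I would expand $\langle (I - X^*X)f, f\rangle \geq 0$ for a general $f = \sum_i k(\cdot, \z_i)\eta_i$: the reproducing property produces $\sum_{i,j}\langle k(\z_i,\z_j)\eta_j, \eta_i\rangle$, while $X^*X$ contributes $\sum_{i,j}\langle W_i\, k_1(\z_i,\z_j)\, W_j^* \eta_j, \eta_i\rangle$, and invoking Theorem \ref{kernel-factor} to write $k_1(\z_i,\z_j) = (1-\langle \z_i,\z_j\rangle)^{-1}$ collapses this into the stated positive semi-definiteness condition.

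For (iii), I would simply apply the multiplier factorization of Theorem \ref{multiplier-factor} to the interpolant $\Phi$: it yields $\tilde{\Phi} \in \clm_1(H^2_n \otimes \cle_1, H^2_n \otimes (\clh_{\tilde{k}} \otimes \cle_2))$ with $\Phi(\z) = \Psi_k(\z)\tilde{\Phi}(\z)$ on $\mathbb{B}^n$, so the constraint $\Phi(\z_i) = W_i$ is equivalent to $\Psi_k(\z_i)\tilde{\Phi}(\z_i) = W_i$, which is precisely (iii). Thus (i), (ii), (iii) are linked through a single diagram: commutant lifting converts (i) into contractivity of $X$, the Gram computation converts contractivity into (ii), and the factorization theorem converts (i) into (iii).

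I expect the main delicate point to be the well-definedness of $X$ on $\clq_2$, since a priori the vector kernel functions $\{k(\cdot,\z_i)\eta\}$ may satisfy linear relations, so one must ensure that $\sum_i k(\cdot,\z_i)\eta_i = 0$ forces $\sum_i k_1(\cdot,\z_i) W_i^*\eta_i = 0$ before $X$ is even a map, and that it extends boundedly to the closed span. The cleanest organization is to let each of the two equivalent statements supply this for itself: from an interpolant one obtains $X^* = P_{\clq_2} M_{\Phi}|_{\clq_1}$ as a genuine bounded operator, while the contractivity hypothesis is stated as a bound on the Gram form, which is exactly what the argument consumes. In this way the equivalence can be arranged to bypass defining $X$ abstractly, and the remaining steps are routine reproducing-kernel computations.
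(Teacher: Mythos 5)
Your proposal is correct and follows essentially the same route as the paper: the same subspaces $\clq_1, \clq_2$, the same operator $X$ defined on kernel functions, reduction to Theorem \ref{thm-CLT main} for (i), the Gram-matrix expansion of $\langle (I - X^*X)f, f\rangle \geq 0$ for (ii), and Theorem \ref{multiplier-factor} for (iii). Your closing remark on the well-definedness of $X$ addresses a point the paper glosses over (it is harmless here since the kernel functions at distinct points are linearly independent, and the equivalence can indeed be phrased so that each side supplies the bounded operator), but it does not change the argument.
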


If $n=1$ and $\tilde{k}(z, w) = (1 - z \bar{w})^{-m}$, $m \in \mathbb{N}$ (that is, weighted Bergman space over $\D$ with an integer weight), then a part of Theorem \ref{thm-interpolation} was proved by Ball and Bolotnikov \cite{BB}.

Note that, the positivity condition in part (ii) of Theorem \ref{thm-interpolation} does not hold in general:

\noindent \textsf{Example:} Consider the regular kernel $k$ as the Bergman kernel on $\D$, that is
\[
k(z,w) = \frac{1}{(1-z\bar{w})^2} \quad \quad (z, w \in \D).
\]
Here
\[
{k}(z, w) = \tilde{k}(z, w) = \Psi_k(z) \Psi_k^*(w)= \frac{1}{(1-z\bar{w})}\quad \quad (z, w \in \D).
\]
Then, for a given pair of points $\{w_1,w_2\} \subseteq \D$, condition (ii) in Theorem \ref{thm-interpolation} holds  for some pair $\{z_1, z_2\} \subseteq \D$ if and only if
\[
\begin{bmatrix}
&\frac{1}{1-|z_1|^2} -|w_1|^2   & \frac{1}{1- z_1 \bar{z}_2} - w_1 \bar{w}_2\\
& \frac{1}{1-z_2 \bar{z}_1} - w_2 \bar{w}_1 & \frac{1}{1-|z_2|^2} - |w_2|^2
\end{bmatrix} \diamond
\begin{bmatrix}
&\frac{1}{1-|z_1|^2}     & \frac{1}{1- z_1\bar{z}_2} \\
& \frac{1}{1-z_2\bar{z}_1}   & \frac{1}{1-|z_2|^2}
\end{bmatrix} \geq 0,
\]
where $`\diamond $' denotes the Schur product of matrices. However, if $z_1 = w_2 = 0$ and $z_2 \neq 0$, then it is easy to see that the positivity condition fails to hold for any $w_1 \in \D$ such that
\[
\frac{1 - |w_1|^2}{1 - |z_2|^2} < 1.
\]

%\vspace{0.1in}

\noindent\textbf{Acknowledgement:}
The research of the second named author is supported by NBHM  (National Board of Higher Mathematics, India) post-doctoral fellowship no: 0204/27-\\/2019/R\&D-II/12966.
The research of the third named author is supported in part by NBHM grant NBHM/R.P.64/2014, and the Mathematical Research Impact Centric Support (MATRICS) grant, File No: MTR/2017/000522 and Core Research Grant, File No: CRG/2019/000908, by the Science and Engineering Research Board (SERB), Department of Science \& Technology (DST), Government of India. The third author also would like to thanks the Institute of Mathematics of the Romanian Academy, Bucharest, Romania, for its warm hospitality during a visit in May 2019.

\end{document}